\newtheorem{theorem}{Theorem}
\numberwithin{theorem}{section}
\DeclareMathOperator*{\esssup}{ess\,sup}
\begin{document}
\title{Bounded operators on mixed norm Lebesgue spaces}

\author{Nikita Evseev}
\author{Alexander Menovschikov}
\address{
Novosibirsk State University\\
 1 Pirogov st, Novosibirsk, 630090
}
\address{
Sobolev Institute of Mathematics\\
4 Acad. Koptyug avenue, 630090 Novosibirsk Russia
}

\address{
Peoples Friendship University of Russia\\
117198 Moscow, Russia
}

\email{evseev@math.nsc.ru}
\email{menovschikov@math.nsc.ru}

\thanks{The publication was supported by the Ministry of Education and Science of the Russian Federation (Project number 1.3087.2017/4.6)}

\date{\today}
\maketitle

\begin{abstract}

We study two classes of bounded operators on mixed norm Lebesgue spaces, namely composition operators and product operators.
A complete description of bounded composition operators on mixed norm Lebesgue spaces are given.
For a certain class of integral operators, we provide sufficient conditions for boundedness.
We conclude by applying the developed technique to the investigation of Hardy-Steklov type operators.

\end{abstract}

\section{Introduction}
A mixed norm Lebesgue space consists of measurable multivariable functions with 
a norm defined in terms of different iteratively calculated $L_p$ norms. 
These spaces naturally arise when dealing with partial differential
equations e.g.\ in obtaining a priori estimates  \cite{Inv}. 
Moreover a mixed norm Lebesgue space $L_{p,q}$ represents an example of $L_p$ of Banach space valued functions. 
Due to the fact that a broad range of problems naturally lead to mixed norm Lebesgue spaces, they have been studied intensively in recent years.
In this work, we study the boundedness of both composition and multiplication operators, as well as the boundedness of some classes of integral operators.

A composition operator can be defined on any function space. 
Studying properties of this operator allows us to ascertain those changes of variables that map one given function class to another. 
For mixed norm Lebesgue spaces, composition operators were abandoned except for the case of holomorphic functions (e.g.\ \cite{St}).  
Herein, we intend to begin exploration of composition operators on mixed norm Lebesgue spaces.
 

Hardy type operators on mixed norm Lebesgue spaces have been studied in a series of papers \cite{JJG, FGJ}.
The basis for this research was laid in \cite{AK}. 
Similar results were established for a wider class of integral operators in a recent work \cite{GS}. 
In this paper, we present 
generalizations of the results in article \cite{GS} and, as a consequence, we obtain the boundedness of the Hardy-Steklov type  operators.

\section{Mixed Lebesgue Space}

Let $(X_i,\mu_i)$, $i=1,\dots n$, be $\sigma$-finite measurable spaces and 
$\Omega$ be a measurable set from 
$\prod\limits_{i=1}^n X_i$.
Let $1\leq p_i<\infty$ and $P=(p_1,\cdots, p_n)$. 
Define a mixed norm space $L_P(\Omega)$ as a set of measurable functions $f(x)$ with
finite norm
$$ 
||f||_{L_P(\Omega)} = 
\bigg(\int\limits_{X_1}\bigg(\cdots 
\bigg(\int\limits_{X_n} |f(x)|^{p_n} \chi_{\Omega}(x) \, d\mu_n\bigg)^{\frac{p_{n-1}}{p_n}}
\cdots\, \bigg)^{\frac{p_1}{p_2}}  \, d\mu_1\bigg)^{\frac{1}{p_1}}.
$$

Mixed norm Lebesgue spaces are usually defined on a cartesian product and not on an arbitrary domain. 
However, we follow the approach of \cite{Be} and chose a more general definition which helps investigate finer properties.

It is easy to see that a mixed norm Lebesgue space is an example of a nonrearrangement invariant space.
Thus functions from $L_P$ are virtually multivariable i.e.\  such a function can not be represented by its distribution.
Properties of multivariate rearrangements can be found in \cite{Bl}.   

We adopt the following notations:
denote by
$\Omega_{x_1}$ the set of $\tilde x = (x_2,\dots,x_{n})
\in \prod\limits_{i=2}^{n}X_i$ such that $(x_1, \tilde x)$ are in $\Omega$,
then we can write the norm in the form
\begin{equation}\label{norm2}
\|f\|_{L_P(\Omega)} = \bigg(\int\limits_{X_1} \|f\|_{L_{\widetilde{P}(\Omega_{x_1})}}^{p_1} \, d\mu_1\bigg)^{\frac{1}{p_1}},
\end{equation}
where $\widetilde P = (p_2,\dots,p_{n})$ as well define the projection $\pi_i(\Omega)$ of $\Omega$ on space $X_i$.

\section{Composition operator}\label{composition}
For an investigation of composition operators one needs more specific structures on $X_i$.
In particular, we will consider spaces of homogeneous type, 
which are metric measurable spaces with specific relations between the metric and measure
\cite{VU}.

Now let $(X_i, d_i,\mu_i)$, $i=1,\dots n$, be spaces of the homogeneous type. 
Consider another sequence of homogeneous type spaces  $(Y_i,\rho_i,\nu_i)$
and a measurable mapping $\varphi\colon\Omega\to \Omega'$, where  $\Omega'\subset \prod\limits_{i=1}^n Y_i$. 

The mapping $\varphi$ induces a bounded composition operator on mixed norm Lebesgue spaces
$$
C_{\varphi}:L_{P}(\Omega')\to L_{P}(\Omega) \quad\text{ by the rule }\quad (C_{\varphi} f) = f\circ\varphi
$$
whenever $f\circ\varphi \in L_{P}(\Omega)$ and 
$\|C_{\varphi} f\|_{L_{P}(\Omega)}\leq K\|f\|_{L_{P}(\Omega')}$
for every function $f\in L_{P}(\Omega')$, the constant $K$ is independent of choice of $f$.

\textit{Our goal is to find necessary and sufficient conditions on the mapping $\varphi$ under which the composition operator $C_{\varphi}$ is bounded.}

The full description in the case of composition operator on Lebesgue spaces was given in \cite{VU}, 
see also an exhaustive survey on the topic in the book \cite{SM}. 
From \cite[Theorem 4]{VU} it follows that the norm of the bounded composition operator
\begin{equation}\label{norm-lp}
\|C_\varphi\|_{L_p\to L_p} = \esssup\limits_{y \in \Omega'}J^{\frac{1}{p}}_{\varphi^{-1}}(y).
\end{equation}

A mixed norm blending of variables can lead to expulsion from the function class. 
For example consider the mapping $\varphi(x_1,x_2) = (x_2, -x_1)$, which is a rotation in $\mathbf R^2$.
Take the function $f(y_1, y_2) = \frac{1}{(1+|y_1|)\sqrt{1+|y_2|}}\in L_{2,3}(\mathbf R^2)$, on the other hand 
a composition $f\circ\varphi(x_1,x_2) = \frac{1}{(1+|x_2|)\sqrt{1+|x_1|}}$ does not belong to $L_{2,3}(\mathbf R^2)$.
Therefore it is natural to consider changes of variables that preserve the privileged role of 
the first variable over the second and so forth.


Taking into account the above arguments we study changes of variables in the form 
\begin{equation}\label{themap}
\varphi(x) = (\psi_1(x_1), \psi_2(x_1,x_2), \dots, \psi_n(x_1,\dots, x_n ))
\end{equation}
with all $\psi_i$ being injective, except for possible the last $\psi_n$.

Finally, for a composition to be well defined we must assume that 
$\psi_i(x_1,\dots, x_{i-1}, \cdot)$ enjoys the Luzin $\mathcal N^{-1}$-property
for $\mu_1\times\dots\times\mu_{i-1}$-a.e. $(x_1,\dots,x_{i-1})\in \prod\limits_{j=1}^{i} X_j$.
Due to  Luzin $\mathcal N^{-1}$-properties the measure 
$\mu_i\circ\psi_i^{-1}(y_1,\dots,y_{i-1},\cdot)$ is absolutely continuous with respect to $\nu_j$ and
 for $\nu_1\times\dots\times\nu_{i-1}$-a.e. $(y_1,\dots,y_{i-1})\in \prod\limits_{j=1}^{i} Y_j$. 
Thus there are Radon--Nikodym derivatives 
$J(\psi^{-1}_i(y_1,\dots,y_{i-1},\cdot); y_i) : \pi_i(\Omega')\to \mathbf R$ (see figure \ref{fig:1} below)  such that
$$
\mu_i(\psi^{-1}_i(y_1,\dots,y_{i-1},E)) 
= \int\limits_{E}J(\psi^{-1}_i(y_1,\dots,y_{i-1},\cdot); y_i)\, d\nu_i 
$$
for any measurable set $E\in Y_i$ 
and for $\nu_1\times\dots\times\nu_{i-1}$-a.e. $(y_1,\dots,y_{i-1})\in \prod\limits_{j=1}^{i} Y_j$.
Thus the change of variables formula for with mapping $\psi_1$  is written as 
\begin{equation}\label{eq:chf1}
\int\limits_{\psi^{-1}_1(E)}g(\psi_1(x_1))\, d\mu_1 = \int\limits_Eg(y_1)J(\psi^{-1}_1(\cdot); y_1)\, d\nu.
\end{equation}
Whereas for mapping $\psi_2$ the formula turns to
$$
\int\limits_{\psi^{-1}_1(x_1,D)}g(\psi_1(x_1),\psi_2(x_1,x_2))\, d\mu_1 = \int\limits_Dg(y_1)J(\psi^{-1}_2(y_1,\cdot); y_2)\, d\nu 
\quad \text{a.e} x_1\in\pi_1(\Omega) \text{ and } y_1=\psi_1(x_1), 
$$
for measurable set $D\subset\pi_2(\Omega'_{y_1})$ and  a. e.\ $x_1\in\pi_1(\Omega), y_1=\psi_1(x_1)$.

\begin{figure}[H]
\includegraphics[width=\linewidth]{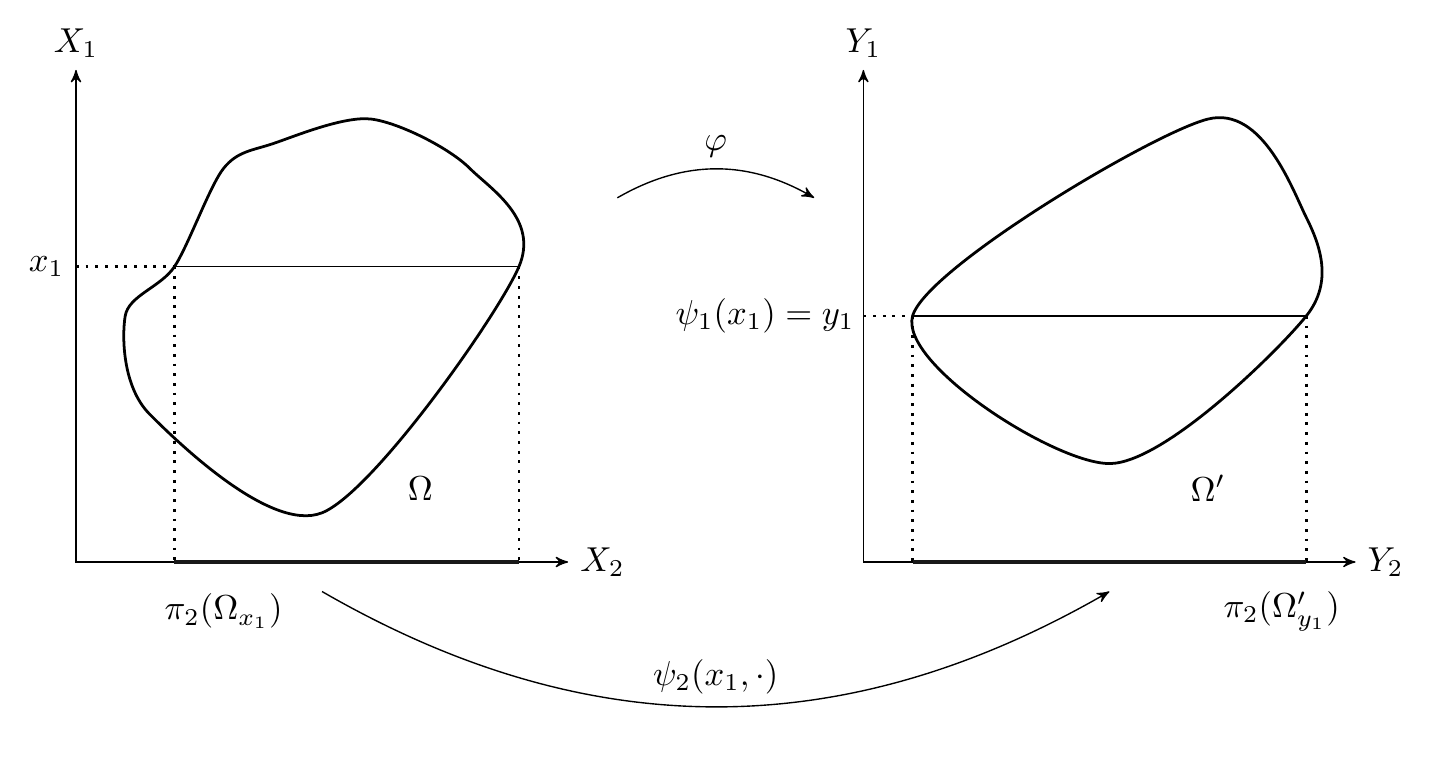}
\caption{The mapping $\varphi$ acts by layers.}
\label{fig:1}
\end{figure}

Now we are ready to state and prove the following result
\begin{theorem}\label{theorem:boundedLpq}
A measurable mapping $\varphi:\Omega\to \Omega'$ of the form \eqref{themap} induces a bounded operator
$C_{\varphi}:L_{P}(\Omega')\to L_{P}(\Omega)$, if and only if
\begin{equation}\label{eq1:theorem:boundedLpq}
\esssup\limits_{\Omega'}\bigg\{\prod\limits_{i=1}^nJ(\psi^{-1}_i(y_1,\dots,y_{i-1},\cdot); y_i)^{\frac{1}{p_i}}\bigg\}
 < \infty.
\end{equation}
And the norm of the operator
$$
\|C_\varphi\| = \esssup\limits_{\Omega'}\bigg\{\prod\limits_{i=1}^nJ(\psi^{-1}_i(y_1,\dots,y_{i-1},\cdot); y_i)^{\frac{1}{p_i}}\bigg\}.
$$
\end{theorem}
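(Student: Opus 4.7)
The plan is to induct on the number of layers $n$, peeling off the first variable via \eqref{norm2}. The base case $n=1$ is formula \eqref{norm-lp} from \cite[Theorem 4]{VU}, which supplies both directions and the norm identity. For the inductive step I note that, for each fixed $x_1$, the slice map $\widetilde\varphi_{x_1}(x_2,\dots,x_n)=(\psi_2(x_1,x_2),\dots,\psi_n(x_1,\dots,x_n))$ is again of the layered form \eqref{themap} with one fewer layer; the Luzin $\mathcal N^{-1}$ hypothesis on $\psi_{i+1}$ descends to the slice for $\mu_2\times\cdots\times\mu_i$-a.e.\ $(x_2,\dots,x_i)$ by Fubini, so the induction hypothesis applies with slice Jacobians $J(\psi_i^{-1}(\psi_1(x_1),y_2,\dots,y_{i-1},\cdot);y_i)$ for $i\ge 2$.

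Sufficiency: the induction hypothesis yields, for a.e.\ $x_1$,
\[
\|f\circ\varphi(x_1,\cdot)\|_{L_{\widetilde P}(\Omega_{x_1})}\le \widetilde M(\psi_1(x_1))\,\|f(\psi_1(x_1),\cdot)\|_{L_{\widetilde P}(\Omega'_{\psi_1(x_1)})},
\]
where $\widetilde M(y_1)=\esssup_{\Omega'_{y_1}}\prod_{i=2}^n J(\psi_i^{-1}(y_1,\dots,y_{i-1},\cdot);y_i)^{1/p_i}$. Raising to the $p_1$ power, integrating in $x_1$, and applying \eqref{eq:chf1} with $\psi_1$ gives
\[
\|C_\varphi f\|_{L_P(\Omega)}^{p_1}\le \int_{Y_1}\widetilde M(y_1)^{p_1}\, J(\psi_1^{-1}(\cdot);y_1)\,\|f(y_1,\cdot)\|_{L_{\widetilde P}(\Omega'_{y_1})}^{p_1}\, d\nu_1,
\]
whose prefactor is pointwise dominated by the $p_1$-th power of the quantity in \eqref{eq1:theorem:boundedLpq}, since an iterated essential supremum equals the essential supremum on $\Omega'$.

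Necessity: assume $\|C_\varphi\|=K$ and test against separated functions $f(y_1,\dots,y_n)=g(y_1)h(y_2,\dots,y_n)$. Combining the operator inequality with \eqref{eq:chf1} reads
\[
\int_{Y_1}g(y_1)^{p_1}\Bigl[\|h\circ\widetilde\varphi_{\psi_1^{-1}(y_1)}\|_{L_{\widetilde P}(\Omega_{\psi_1^{-1}(y_1)})}^{p_1}\,J(\psi_1^{-1}(\cdot);y_1)-K^{p_1}\|h\|_{L_{\widetilde P}(\Omega'_{y_1})}^{p_1}\Bigr]\, d\nu_1\le 0.
\]
Arbitrariness of $g\ge 0$ forces the bracket to be $\le 0$ for $\nu_1$-a.e.\ $y_1$, so the slice operator has norm at most $K\,J(\psi_1^{-1}(\cdot);y_1)^{-1/p_1}$; passing to a countable dense family of $h$'s in $L_{\widetilde P}$ collapses the exceptional sets into a single null set, and outside it the induction hypothesis gives $\prod_{i=2}^n J(\psi_i^{-1}(y_1,\dots,y_{i-1},\cdot);y_i)^{1/p_i}\le K\,J(\psi_1^{-1}(\cdot);y_1)^{-1/p_1}$ a.e.\ on $\Omega'_{y_1}$, which rearranges to \eqref{eq1:theorem:boundedLpq} and matches the norm exactly.

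The main obstacle is the necessity step: the null set of bad $y_1$ must not depend on the test function $h$, which I would fix by choosing a countable dense family of $h$ and relying on the measurability of $y_1\mapsto\|h\|_{L_{\widetilde P}(\Omega'_{y_1})}$ that Fubini supplies via $\chi_{\Omega'}$. A secondary but routine point is the interchange of iterated essential suprema with a single essential supremum on $\Omega'$, which is handled by the $\sigma$-finiteness built into the setup.
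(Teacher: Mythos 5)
Your proof is correct and follows essentially the same route as the paper's: induction on the number of layers, with the base case supplied by \cite[Theorem 4]{VU}, sufficiency via the layered change of variables, and necessity by reducing to the slice operator $C_{\tilde\varphi(x_1,\cdot)}:L_{\widetilde P}(\Omega'_{\psi_1(x_1)})\to L_{\widetilde P}(\Omega_{x_1})$ and invoking the inductive hypothesis. The only (minor) difference is the device used to localize in $y_1$ in the necessity step --- you test with separated functions $g(y_1)h(\tilde y)$, use arbitrariness of $g\ge 0$, and pass to a countable dense family of $h$, whereas the paper tests with $\chi_B(y_1)g_{y_1}(\tilde y)$ for a $y_1$-dependent family and applies the Lebesgue differentiation theorem; both accomplish the same reduction, and your handling of the $h$-dependence of the exceptional null set is, if anything, more explicit than the paper's ``arbitrariness of the choice of families'' step.
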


\begin{proof}
We prove the necessity by induction.
We start with the base case of $n=1$ i.e. \eqref{eq1:theorem:boundedLpq} is a consequence of \cite[Theorem 4]{VU}.
Then, the inductive step is as follows.
Suppose the theorem is proved for some $n$. 
Prove that \eqref{eq1:theorem:boundedLpq} holds for $n+1$.

If operator $C_{\varphi}:L_{P}(\Omega')\to L_{P}(\Omega)$ is bounded then
$$
\|f\circ\varphi\|_{L_{P}(\Omega)} 
\leq K \|f(y)\|_{L_{P}(\Omega')}.
$$


Take a ball $B=B(\stackrel oy_1,\rho)\subset Y_1$ and an arbitrary family $\{g_{y_1}(\tilde{y})\}$ which consists
of such functions $g_{y_1}(\tilde y)\in L_{\widetilde P}(\Omega'_{y_1})$ and 
$\|g_{y_1} \|_{L_{\widetilde P}(\Omega'_{y_1})} = 1$, that $\tilde y = (y_2,\dots,y_{n+1})$.
Then for function $f(y_1,\tilde y) = \chi_B(y_1)g_{y_1}(\tilde y)$
the composition $f\circ\varphi(x) = \chi_B(\psi_1(x_1))g_{\psi_1(x_1)}(\tilde \varphi(x))$,
where $\tilde\varphi(x) = (\psi_2(x_1,x_2), \dots, \psi_n(x_1,\dots, x_n ))$.
Taking into account \eqref{norm2} the above inequality takes the form 
$$
\bigg(\int\limits_{\psi_{1}^{-1}(B)} \|g_{\psi_1(x_1)}\circ\tilde\varphi(x_1,\cdot)\|_{L_{\tilde P}(\Omega_{x_1})}^{p_1} \, d\mu_1\bigg)^{\frac{1}{p_1}}
\leq K (\nu_1(B))^{\frac{1}{p_1}}.
$$
Or, because of the arbitrariness of the choice of families
$$
\bigg(\int\limits_{\psi_1^{-1}(B)} 
\sup\|g_{\psi_1(x_1)}\circ \tilde\varphi(x_1,\cdot) \|^{p_1}_{L_{\tilde P}(\Omega_{x_1})}
\, d\mu_1\bigg)^{\frac{1}{p_1}}
\leq K (\nu_1(B))^{\frac{1}{p_1}},
$$
where the supremum is taken over all functions with unit norm. 
Note that the above supremum is the norm of composition operator 
$C_{\tilde\varphi(x_1,\cdot)}:L_{\tilde P}(\Omega'_{\psi_1(x_1)})\to L_{\tilde P}(\Omega_{x_1})$,
which is induced by the mapping $\tilde\varphi(x_1,\cdot)$ for each fixed $x_1$.
Using the induction hypothesis 
$$
\|C_{\tilde\varphi(x_1,\cdot)}\| 
= \esssup\limits_{\Omega'_{\psi_1(x_1)}}\bigg\{\prod\limits_{i=2}^{n+1}J(\psi^{-1}_i(\psi_1(x_1),\dots,y_{i-1},\cdot); y_i)^{\frac{1}{p_i}}\bigg\}
\quad\text{ for a. e. } x_1\in\Omega\cap X_1.
$$

Applying the change of variables formula we deduce
$$
\bigg(\int\limits_{B} 
\esssup\limits_{\Omega'_{y_1}}\bigg\{\prod\limits_{i=2}^{n+1}J(\psi^{-1}_i(y_1,\dots,y_{i-1},\cdot); y_i)^{\frac{1}{p_i}}\bigg\}^{p_1} 
J(\psi_1^{-1}(\cdot); y_1)\, d\nu_1\bigg)^{\frac{1}{p_1}}
\leq K (\nu_1(B))^{\frac{1}{p_1}},
$$

Dividing by $(\nu_1(B))^{\frac{1}{p_1}}$ and applying the Lebesgue theorem we obtain
$$
\esssup\limits_{\Omega'}\bigg\{\prod\limits_{i=1}^{n+1}J(\psi^{-1}_i(y_1,\dots,y_{i-1},\cdot); y_i)^{\frac{1}{p_i}}\bigg\}
 \leq K.
$$
Since the inductive step has been performed then, by mathematical induction, the statement of the theorem holds.

\textit{Sufficiency.}  Let \eqref{eq1:theorem:boundedLpq} hold.  
Again applying the change of variables formulas we arrive at the desired inequality 
\begin{multline*}
\|f\circ\varphi\|_{L_{P}(\Omega)} 
=\bigg(\int\limits_{X_1}\bigg(\cdots 
\bigg(\int\limits_{X_n} |f(\varphi(x))|^{p_n} \chi_{\Omega}(\varphi(x))
 \, d\mu_n\bigg)^{\frac{1}{p_n}}
\cdots\, \bigg)^{p_1}  \, d\mu_1\bigg)^{\frac{1}{p_1}}\\
=\bigg(\int\limits_{Y_1}\bigg(\cdots 
\bigg(\int\limits_{Y_n} |f(y)|^{p_n} \chi_{\Omega'}(y)
J(\psi^{-1}_n(y_1,\dots,y_{n-1},\cdot); y_n) \, d\nu_n\bigg)^{\frac{1}{p_n}}
\cdots\, \bigg)^{p_1}  J(\psi^{-1}_1(\cdot); y_1)\, d\nu_1\bigg)^{\frac{1}{p_1}}\\
\leq \esssup\limits_{\Omega'}\bigg\{\prod\limits_{i=1}^{n+1}J(\psi^{-1}_i(y_1,\dots,y_{i-1},\cdot); y_i)^{\frac{1}{p_i}}\bigg\}\|f\|_{L_{P}(\Omega')}.
\end{multline*}
\end{proof}

\section{Product operator}

As noted in the introduction, Hardy type operators were one of the first class of integral operators to be considered on mixed norm Lebesgue spaces. 
We start this section by obtaining Hardy's inequality in $L_{P}$.
Furthermore, we define a more general class of integral operators, namely product operators, which includes the Hardy operator
and Hardy-Steklov type operators.
 
For our purposes, we will use the Minkowski's integral inequality, which has the following form:
\begin{equation}\label{eq:minkowski}
\bigg(\int\limits_{X_2}\bigg | \int\limits_{X_1} f(x_1,x_2) \, d\mu_1 \bigg |^p \, d\mu_2 \bigg)^{\frac{1}{p}} \leq \int\limits_{X_1} \bigg(\int\limits_{X_2} |f(x_1,x_2)|^p \, d\mu_2 \bigg )^{\frac{1}{p}} \, d\mu_1,
\end{equation}
where $(X_1, \mu_1)$ and $(X_2, \mu_2)$ are two $\sigma$-finite measure spaces and $f : X_1 \times X_2 \to \mathbf{R}$ is measurable.

\subsection{Hardy inequality}

The classical Hardy inequality states if $f$ is an integrable function with non-negative values, then

\begin{equation}\label{eq:hardy}
 \int\limits_{0}\limits^{\infty} \bigg(\frac{1}{x}\int\limits_{0}\limits^{x} f(y) \, dy \bigg)^p \, dx \leq \bigg( \frac{p}{p-1} \bigg)^p \bigg( \int\limits_{0}\limits^{\infty} f(x)^p \, dx \bigg),
\end{equation} 

or in another form 

$$
\|Hf\|_{L_p} \leq \frac{p}{p-1} \|f\|_{L_p},
$$ 

where $H: L_p \to L_p $ is a Hardy operator defined by the rule $Hf(x) = \frac{1}{x}\int\limits_{0}\limits^{x} f(y) \, dy $. 
In \cite{HL} it is shown that the constant is the best possible one. 
Therefore, the norm of Hardy operator is equal to $\frac{p}{p-1}$.

In what follows we will need Hardy's inequality for mixed norm Lebesgue spaces. 
First of all, one defines the operator
$$
H_n f (x) = \frac{1}{x_1\cdot\dots\cdot x_n}\int\limits_{0}^{x_1}\cdots 
\int\limits_{0}^{x_n} f(y) \, dy_n
\cdots\,  dy_1.
$$

In particular, the two dimensional operator 
$H_2: L_{p_1,p_2} \to L_{p_1,p_2} $ is defined by the rule 
$$
H_2 f (x_1,x_2) = \frac{1}{x_1}\int\limits_{0}\limits^{x_1} \frac{1}{x_2}\int\limits_{0}\limits^{x_2} f(y_1, y_2) \, dy_2 \, dy_1.
$$

Upon using the change of variables formula and inequality \eqref{eq:hardy}, we derive  the analogue of Hardy's inequality

\begin{multline*}
 \bigg( \int\limits_{0}\limits^{\infty} \bigg( \int\limits_{0}\limits^{\infty} \bigg( \frac{1}{x_1}\int\limits_{0}\limits^{x_1} \frac{1}{x_2}\int\limits_{0}\limits^{x_2} f(y_1, y_2) \, dy_2 \, dy_1 \bigg)^{p_2} \, dx \bigg)^{\frac{p_1}{p_2}} \, dx_1\bigg)^{\frac{1}{p_1}}\\ 
 = \bigg( \int\limits_{0}\limits^{\infty} \bigg( \int\limits_{0}\limits^{\infty} \bigg( \int\limits_{0}\limits^{1} \int\limits_{0}\limits^{1} f(y_1x_1, y_2x_2) \, dy_2 \, dy_1 \bigg)^{p_2} \, dx_2 \bigg)^{\frac{p_1}{p_2}} \, dx_1\bigg)^{\frac{1}{p_1}} \\
 \leq \int\limits_{0}\limits^{1} \int\limits_{0}\limits^{1} \bigg( \int\limits_{0}\limits^{\infty} \bigg( \int\limits_{0}\limits^{\infty} f(y_1x_1, y_2x_2)^{p_2} \, dx_2 \bigg)^{\frac{p_1}{p_2}} \, dx_1\bigg)^{\frac{1}{p_1}} \, dy_2 \, dy_1\\ 
 = \int\limits_{0}\limits^{1} \int\limits_{0}\limits^{1} \bigg( \int\limits_{0}\limits^{\infty} \bigg( \int\limits_{0}\limits^{\infty} f(x_1, x_2)^{p_2} \, \frac{dx_2}{y_2} \bigg)^{\frac{p_1}{p_2}} \, \frac{dx_1}{y_1} \bigg)^{\frac{1}{p_1}} \, dy_2 \, dy_1 \\
= \frac{p_1}{p_1-1} \frac{p_2}{p_2-1} \bigg(\int\limits_{0}\limits^{\infty} \bigg(\int\limits_{0}\limits^{\infty} f(x_1,x_2)^{p_2}  \, dx_2 \bigg)^{\frac{p_1}{p_2}} \, dx_1 \bigg)^{\frac{1}{p_1}}
\end{multline*}

As in the case of the classical Hardy inequality this statement is equivalent to 
$$
 \|H_2 f\|_{L_{p_1,p_2}} \leq \frac{p_1}{p_1-1} \frac{p_2}{p_2-1} \|f\|_{L_{p_1,p_2}}. 
$$

For arbitary $n$
$$
 \|H_n f\|_{L_{P}} \leq  \prod\limits_{i=1}^n\frac{p_i}{p_i-1} \|f\|_{L_{P}}. 
$$

\subsection{Product operator.}
Let $(X_i,\mu_i)$ and $(Y_i, \nu_i)$, $i = 1,\dots n$ be $\sigma$-finite measurable spaces
and $\Omega \subset \prod\limits_{i=1}^n X_i$, $\Omega' \subset \prod\limits_{i=1}^n Y_i$ be measurable sets.
Define a product operator $K$ which acts on $(\mu_1 \times \dots \times \mu_n)$-measurable functions as follows 
$$
(Kf)(x) = \int\limits_{\Omega'} \prod\limits_{i=1}^n k_i(x_1, \dots, x_i, y_i)f(y)\, d(\nu_1 \times \dots \times\nu_n),
$$
where $k_i$ are non-negative measurable functions. 
Next, define partial operators $K_i$
$$
(K_ig_i)(y_1, \dots, y_{i-1}, x_i) = \int\limits_{\pi_i(\Omega')} k_i(x_1, \dots, x_i, y_i)g(y_i) \, d\nu_i,
$$ 
By the Tonelli theorem
\begin{multline*}
(Kf)(x) = \int\limits_{Y_1}k_1(x_1,y_1)\int\limits_{Y_2} \dots \int\limits_{Y_n} k_n(x_1, \dots, x_n, y_n) f(y) \chi_{\Omega'}(y) \,d\nu_n \dots d\nu_2 d\nu_1 \\
= \int\limits_{Y_1}k_1(x_1,y_1) K_{x_1}f_{y_1}(\tilde y)\,d\nu_1. 
\end{multline*}
where $K_{x_1}$ is a product operator with $n-1$ kernels $k_i = k_i(x_1, x_2, \dots, x_i, y_i)$, $i = 2, \dots, n$ which acts on functions $f_{y_1}(\tilde y)$ by the rule
$$
(K_{x_1}f_{y_1})(\tilde x) = \int\limits_{\Omega'_{y_1}} \prod\limits_{i=2}^n k_i(x_1, x_2 \dots, x_i, y_i)f_{y_1}(\tilde y) \, d(\nu_2 \times \dots \times\nu_n).
$$

Here, we find conditions when the operator
$$
K : L_Q(\Omega') \to L_P(\Omega)
$$
is bounded. 
In the following theorem, sufficient conditions are given
in terms of boundedness of the partial operators.
The proof partly follows the approach from \cite{AK}.

\begin{theorem}\label{theorem:product}
If for $i = 1, \dots, n$
\begin{equation}\label{theorem:product:eq1}
\|K_i g_i(y_1, \dots, y_{i-1}, \cdot) \|_{L_{p_i}(\pi_i(\Omega))} \leq C_i\|g_i(y_1, \dots, y_{i-1}, \cdot) \|_{L_{q_i}(\pi_i(\Omega'))}
\end{equation}
$\nu_1 \times \dots \times \nu_{i-1}$-a.e. and for all $g_i(y_1, \dots, y_{i-1}, \cdot) \in L_{q_i}(\pi_i(\Omega'))$.

Then 
$$
\|Kf\|_{L_P(\Omega)} \leq \prod\limits_{i=1}^n C_i \|f\|_{L_Q(\Omega')}.
$$
\end{theorem}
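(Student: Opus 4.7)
The plan is to proceed by induction on the dimension $n$, following the strategy from \cite{AK}. The base case $n=1$ is nothing but the hypothesis \eqref{theorem:product:eq1} for $K_1$. For the inductive step, I assume the theorem has been established for product operators with $n-1$ kernels and aim to deduce it for $n$.

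Using the Tonelli decomposition derived just before the theorem,
\[(Kf)(x) = \int_{Y_1} k_1(x_1,y_1)\,(K_{x_1}f_{y_1})(\tilde x)\, d\nu_1,\]
where $K_{x_1}$ is the product operator with the $n-1$ kernels $k_i(x_1,\dots,x_i,y_i)$, $i=2,\dots,n$, and $f_{y_1}(\tilde y):=f(y_1,\tilde y)$. By \eqref{norm2},
\[\|Kf\|_{L_P(\Omega)} = \bigg(\int_{X_1}\|(Kf)(x_1,\cdot)\|_{L_{\widetilde P}(\Omega_{x_1})}^{p_1}\, d\mu_1\bigg)^{1/p_1},\]
so the first step is to commute the $y_1$-integration past the inner $L_{\widetilde P}(\Omega_{x_1})$ norm. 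This is done by iterating Minkowski's integral inequality \eqref{eq:minkowski} $n-1$ times, peeling off $L_{p_n}$ first, then $L_{p_{n-1}}$, and so on out to $L_{p_2}$. Each application is legal because every $p_i\geq 1$, and since $k_1\geq 0$ the result is the pointwise estimate
\[\|(Kf)(x_1,\cdot)\|_{L_{\widetilde P}(\Omega_{x_1})} \leq \int_{Y_1} k_1(x_1,y_1)\,\|K_{x_1}f_{y_1}\|_{L_{\widetilde P}(\Omega_{x_1})}\, d\nu_1.\]

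Next, for almost every fixed $(x_1,y_1)$, the operator $K_{x_1}$ acts on functions of $\tilde y$ as a product operator with $n-1$ kernels, and its partial operators (in indices $i=2,\dots,n$, with $x_1$ treated as an additional fixed parameter) satisfy exactly the bounds assumed in \eqref{theorem:product:eq1}. The induction hypothesis therefore gives
\[\|K_{x_1}f_{y_1}\|_{L_{\widetilde P}(\Omega_{x_1})} \leq \prod_{i=2}^n C_i\cdot \|f_{y_1}\|_{L_{\widetilde Q}(\Omega'_{y_1})}.\]
Setting $h(y_1):=\|f_{y_1}\|_{L_{\widetilde Q}(\Omega'_{y_1})}$ and combining,
\[\|(Kf)(x_1,\cdot)\|_{L_{\widetilde P}(\Omega_{x_1})} \leq \prod_{i=2}^n C_i\cdot (K_1 h)(x_1).\]
Taking the $L_{p_1}(X_1)$ norm and invoking \eqref{theorem:product:eq1} at $i=1$ (the parameter list being empty at this level), I obtain the desired bound $\prod_{i=1}^n C_i\,\|f\|_{L_Q(\Omega')}$, since $\|h\|_{L_{q_1}(\pi_1(\Omega'))}$ equals $\|f\|_{L_Q(\Omega')}$ by the very definition of the mixed norm.

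The main technical obstacle is the iterated Minkowski step: the inner norm is a nested integral of depth $n-1$, and one must commute the outer $y_1$-integral past each level in turn while keeping track of the parameter dependencies correctly. A secondary point to verify carefully is the inheritance of the partial-operator hypothesis by $K_{x_1}$; this is transparent from the form of the kernels $k_i$, but I would make it explicit to justify the application of the induction hypothesis uniformly in $x_1$.
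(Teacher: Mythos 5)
Your proposal is correct and follows essentially the same route as the paper's proof: induction on the number of kernels, the Tonelli decomposition $(Kf)(x)=\int_{Y_1}k_1(x_1,y_1)(K_{x_1}f_{y_1})(\tilde x)\,d\nu_1$, iterated Minkowski to pull the $y_1$-integral outside the inner mixed norm, the induction hypothesis applied to $K_{x_1}$, and finally the $i=1$ hypothesis applied to $h(y_1)=\|f_{y_1}\|_{L_{\widetilde Q}(\Omega'_{y_1})}$. The only differences are cosmetic (you index the induction from $n-1$ to $n$ rather than $n$ to $n+1$), and your closing remarks about the direction of the Minkowski iteration and the inheritance of hypothesis \eqref{theorem:product:eq1} by the partial operators of $K_{x_1}$ are exactly the points the paper also relies on.
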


\begin{proof}
Again use the principle of mathematical induction to prove the theorem.
The base case of induction folloving from theorem's assertions 
(that the product operator $K_{x_1, \dots, x_{n-1}}$ coincides with the one-dimensional operator $K_n$).
Suppose that for some $n$ theorem is proved. Showing that the same statement also holds for $n+1$.
Using Minkowski's integral inequality \eqref{eq:minkowski} $n$ times
\begin{align*}
\|Kf\|_{L_{P}(\Omega)} = \bigg(\int\limits_{X_1}\bigg(\dots & \bigg(\int\limits_{X_{n+1}} \bigg[ \chi_{\Omega}(x)\int\limits_{Y_1} k_1(x_1,y_1) \times\\
&\times K_{x_1}f_{y_1}(\tilde x)\, d\nu_1 \bigg]^{p_{n+1}} \, d\mu_{n+1}\bigg)^{\frac{p_n}{p_{n+1}}} \dots \bigg)^{\frac{p_1}{p_2}} \, d\mu_1 \bigg)^{\frac{1}{p_1}}\\
\leq \bigg(\int\limits_{X_1} \bigg(\int\limits_{Y_1} k_1(x_1,y_1) & \bigg[\int\limits_{X_2} \bigg( \dots \bigg(\int\limits_{X_{n+1}} (K_{x_1}f_{y_1}(\tilde x))^{p_{n+1}} \times \\
&\times \chi_{\Omega_{x_1}}(\tilde x) \, d\mu_{n+1} \bigg]^{\frac{p_n}{p_{n+1}}} \dots \bigg)^{\frac{p_2}{p_3}} d\mu_2\bigg)^{\frac{1}{p_2}} d\nu_1\bigg)^{p_1}  d\mu_1 \bigg)^{\frac{1}{p_1}}\\
= \bigg(\int\limits_{X_1} \bigg(\int\limits_{Y_1}k_1(x_1,y_1) & \|K_{x_1}f_{y_1}\|_{L_{\widetilde {P}}(\Omega_{x_1})} \, d\nu_1\bigg)^{p_1}\,d\mu_1 \bigg)^{\frac{1}{p_1}}.
\end{align*}
Note that if $f\in L_Q(\Omega')$ then the function $h(y_1) = \|f_{y_1}\|_{L_{\widetilde{Q}}({\Omega'}_{y_1})}$ belongs to $L_{q_1}(Y_1)$.
Then, we apply \eqref{theorem:product:eq1} along with the induction hypothesis and deduce
\begin{multline*}
\bigg(\int\limits_{X_1} \bigg(\int\limits_{Y_1}k_1(x_1,y_1) \|K_{x_1}f_{y_1}\|_{L_{\widetilde {P}}(\Omega_{x_1})} \, d\nu_1 \bigg)^{p_1}\,d\mu_1 \bigg)^{\frac{1}{p_1}}\\
\leq \prod\limits_{i=2}^{n+1}C_i \bigg(\int\limits_{X_1} \bigg(\int\limits_{Y_1}k_1(x_1,y_1) \|f_{y_1}\|_{L_{\widetilde {q}}(\Omega'_{y_1})} \, d\nu_1 \bigg)^{p_1}\,d\mu_1 \bigg)^{\frac{1}{p_1}}\\ 
= \prod\limits_{i=2}^{n+1}C_i \bigg(\int\limits_{X_1} \bigg((K_1 h)(x_1)\bigg)^{p_1}\, d\mu_1 \bigg)^{\frac{1}{p_1}}
\leq \prod\limits_{i=1}^{n+1}C_i \bigg(\int\limits_{Y_1} |h(y_1)|^{q_1}\, d\nu_1 \bigg)^{\frac{1}{q_1}} \\
= \prod\limits_{i=1}^{n+1}C_i\|f\|_{L_Q(\Omega')}
\end{multline*}
as desired.
\end{proof}

If $\Omega=\Omega'=\mathbf R^2_{+}$ and $k_1(x_1,y_1) = \chi_{[0,x_1]}(y_1)$, $k_2(x_1,x_2,y_2) = \chi_{[0,x_2]}(y_2)$ then 
the product operator turns out to be a two-dimensional Hardy operator 
$$
Hf = \int\limits_0^{x_1}\int\limits_0^{x_2} f(y_1,y_2)\,dy_2dy_1,
$$
which acts from $L_{q_1,q_2}(\mathbf R^2, \nu_1, \nu_2)$ to $L_{p_1,p2}(\mathbf R^2, \mu_1, \mu_2)$.

We derive from theorem \ref{theorem:product} that sufficient conditions for the boundedness of $H$ on mixed norm Lebesgue spaces 
are the boundedness of two one-dimensional Hardy operators   
$$
H_1g = \int\limits_0^{x_1} g(y_1)\,dy_1 \quad\text{ and }\quad H_1g = \int\limits_0^{x_2} g(y_2)\,dy_2. 
$$
on corresponding Lebesgue spaces. 
The above statement agrees with \cite[Proposition 2]{AK} when measures $\mu_i$, $\nu_i$ are weighted functions.

\section{Hardy-Steklov type operators}
In this section we apply the technique from the previous one to revise properties of multiplication operators.
Besides we obtain a corollary of theorem \ref{theorem:boundedLpq} for Hardy-Steklov type operators.

\subsection{Multiplication operator}
Let $L_{P}(\Omega)$ be as in section \ref{composition}.
Define a multiplication operator $M_g : L_{P}(\Omega) \to L_{P}(\Omega)$ by the rule $(M_g f)(x) = f(x)g(x)$. 
Applying considerations similar to the ones in the proof of theorem \ref{theorem:boundedLpq}, we obtain the following proposition.

\begin{theorem}\label{theorem:Multiplication}
The multiplication operator $M_g$ is bounded if and only if $g \in L_\infty(\Omega)$.
The norm of the operator $ \|M_g\| = \esssup\limits_{\Omega} g(x) $.
\end{theorem}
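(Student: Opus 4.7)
The plan is to prove the two inequalities $\|M_g\|\le \esssup_\Omega|g|$ and $\esssup_\Omega|g|\le \|M_g\|$ separately; the first gives sufficiency (and shows $g\in L_\infty$ suffices), the second gives both necessity and the sharp lower bound on the norm.

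For the easy direction, I would observe that if $M:=\esssup_\Omega |g|<\infty$ then $|g(x)|\le M$ a.e., so $|(M_gf)(x)|\le M|f(x)|$ pointwise a.e. on $\Omega$. Monotonicity of each iterated $L_{p_i}$ integral in the definition of $\|\cdot\|_{L_P(\Omega)}$ immediately gives $\|M_gf\|_{L_P(\Omega)}\le M\,\|f\|_{L_P(\Omega)}$, so $M_g$ is bounded with $\|M_g\|\le M$.

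For the converse I would mimic the inductive scheme of Theorem~\ref{theorem:boundedLpq}. The base case $n=1$ is the standard $L_p$ fact: if $\|M_g\|_{L_p\to L_p}=K$ and $|g|>K'$ on a set of positive $\mu_1$-measure for some $K'<\esssup|g|$, then by $\sigma$-finiteness this set contains a subset $F$ with $0<\mu_1(F)<\infty$, and $\|M_g\chi_F\|_{L_p}\ge K'\|\chi_F\|_{L_p}$ forces $K\ge K'$; sending $K'\nearrow\esssup|g|$ gives $\|g\|_\infty\le K$. For the inductive step, I would decompose the norm using \eqref{norm2} and feed $M_g$ with functions of the form $f(x_1,\tilde x)=\chi_B(x_1)\,h_{x_1}(\tilde x)$, where $B\subset X_1$ has finite positive $\mu_1$-measure and $\|h_{x_1}\|_{L_{\widetilde P}(\Omega_{x_1})}=1$. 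Boundedness of $M_g$ produces
$$\int_B \|M_{g(x_1,\cdot)}h_{x_1}\|_{L_{\widetilde P}(\Omega_{x_1})}^{p_1}\,d\mu_1 \le K^{p_1}\,\mu_1(B).$$
Taking the supremum over admissible families $\{h_{x_1}\}$ identifies the integrand with $\|M_{g(x_1,\cdot)}\|_{L_{\widetilde P}\to L_{\widetilde P}}^{p_1}$, which by the induction hypothesis equals $(\esssup_{\Omega_{x_1}}|g(x_1,\tilde x)|)^{p_1}$. Since $B$ is arbitrary in a $\sigma$-finite $X_1$, this yields $\esssup_{\Omega_{x_1}}|g(x_1,\cdot)|\le K$ for $\mu_1$-a.e.\ $x_1$, hence $\esssup_\Omega|g|\le K=\|M_g\|$.

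The main obstacle is the step of interchanging the pointwise supremum over the family $\{h_{x_1}\}$ with the integral over $B$, which formally requires checking measurability of $x_1\mapsto\|M_{g(x_1,\cdot)}\|$ and selecting a near-optimal family in a measurable way. A clean workaround, which I would note as an alternative, is to skip the induction entirely: if $|g|>K=\|M_g\|$ on a set of positive product measure, use $\sigma$-finiteness of each $(X_i,\mu_i)$ to extract a subset $F$ of that set contained in a product of finite-measure pieces, so that $0<\|\chi_F\|_{L_P(\Omega)}<\infty$; then $|M_g\chi_F|>K\chi_F$ a.e.\ gives $\|M_g\chi_F\|_{L_P}>K\|\chi_F\|_{L_P}$, a contradiction. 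Either route concludes the proof and combines with the first direction to give $\|M_g\|=\esssup_\Omega|g|$.
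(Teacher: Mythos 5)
Your proposal is correct, and its main line (test functions $\chi_B(x_1)h_{x_1}(\tilde x)$, supremum over unit-norm families to recover $\|M_{g(x_1,\cdot)}\|$, induction hypothesis, then localization in $x_1$) is essentially the argument the paper gives for the necessity direction; the paper localizes by dividing by $\mu_1(B)^{1/p_1}$ over shrinking balls and invoking the Lebesgue differentiation theorem, whereas you let $B$ range over arbitrary finite-measure sets, which reaches the same a.e.\ conclusion with less structure on $X_1$. Two points where you go beyond the paper are worth keeping. First, you prove the easy inequality $\|M_g\|\le\esssup_\Omega|g|$ explicitly via pointwise domination and monotonicity of the iterated norms; the paper omits this half entirely, though it is needed for the ``if'' direction and for the equality of norms. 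Second, you correctly flag the measurability issue in passing the supremum over the family $\{h_{x_1}\}$ inside the integral (the paper performs this interchange silently), and your proposed workaround --- if $|g|>K+\varepsilon$ on a set of positive measure, extract a subset $F$ inside a product of finite-measure pieces with $0<\|\chi_F\|_{L_P}<\infty$ and test $M_g$ on $\chi_F$ --- is a genuinely different, shorter, and more robust proof of necessity: it needs no induction, no homogeneous-type structure, and no measurable selection, at the cost of not producing the layerwise identity $\|M_{g(x_1,\cdot)}\|=\|g(x_1,\cdot)\|_{L_\infty(\Omega_{x_1})}$ that the paper's inductive scheme yields along the way.
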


\begin{proof}
It is well known that the theorem holds in the case of Lebesgue space $L_p(\Omega)$ (i.e.\ $n=1$). 
Thus the base case is proven.

Suppose the assertion of the theorem is valid for some $n$.

Take any ball $B=B(\stackrel ox_1,\rho)\subset X_1$ and an arbitrary family $\{f_{x_1}(\tilde x)\}$ which consists
of such functions $f_{x_1}(\tilde x)\in L_{\widetilde P}(\Omega_{x_1})$ and 
$\|f_{x_1} \|_{L_{\widetilde P}(\Omega_{x_1})} = 1$. 
Then for function  $ h(x) = \chi_B(x_1)f_{x_1}(\tilde x) $, due to the boundedness of $M_g$ 
$$
\bigg(\int\limits_B \|g(x_1,\cdot)f_{x_1}(\cdot) \|_{L_{\widetilde P}(\Omega_{x_1})}^{p_1} \, d\mu_1\bigg)^{\frac{1}{p_1}} \leq K (\mu_1(B))^{\frac{1}{p_1}}.
$$
Further, taking the supremum over all $f_{x_1}(\tilde x)$ one obtains the norm of multiplication operator 
$(M_{g(x_1, \cdot)}f_{x_1})(\tilde x) = f_{x_1}(\tilde x)g(x_1,\tilde x)$ under the integral. 
By the induction assumption  it equals $\|g(x_1,\cdot)\|_{L_\infty(\Omega_{x_1})}$.

$$
\bigg(\int\limits_B \|g(x_1,\cdot)\|^{p_1}_{L_\infty(\Omega_{x_1})} \, d\mu_1\bigg)^{\frac{1}{p_1}} \leq K (\mu_1(B))^{\frac{1}{p_1}}.
$$

Applying Lebesgue's theorem we conclude 
$$
\|g(\stackrel ox_1,\cdot)\|^{p_1}_{L_\infty(\Omega_{\stackrel ox_1})} \leq K
$$
and then, due to arbitrariness of the choice of ball $B$, conclude that 
$$
\esssup\limits_{\Omega}g(x) \leq K.
$$
Thus, we have completed the induction step. 
\end{proof}

\subsection{Hardy-Steklov type operators}

Consider another operator $I : L_P(\prod\limits_{i=1}^n Y_i) \to L_P(\Omega)$ by the rule:
$$
(I f)(x) = \frac{1}{\psi_1(x_1)\cdot\dots\cdot\psi_n(x)}\int\limits_{0}\limits^{\psi_1(x_1)} \dots \int\limits_{0}\limits^{\psi_n(x)} f(y) g(y) \, dy_n \dots \, dy_1,
$$
where mappings $\psi_i$ and function $g$ are defined above.

By using the approach from the proof of theorem \ref{theorem:boundedLpq}  we can
prove the following theorem.

\begin{theorem}\label{theorem:CHMg}
Operator $I$ is bounded if and only if 
$$
\esssup\limits_{\Omega'} g(y) \bigg\{\prod\limits_{i=1}^n \frac{p_i}{p_i-1} J(\psi^{-1}_i(y_1,\dots,y_{i-1},\cdot); y_i)^{\frac{1}{p_i}}\bigg\}  < \infty.
$$
The norm of the operator 
$$
\| I \| = \esssup\limits_{\Omega'} g(y) \bigg\{\prod\limits_{i=1}^n \frac{p_i}{p_i-1} J(\psi^{-1}_i(y_1,\dots,y_{i-1},\cdot); y_i)^{\frac{1}{p_i}}\bigg\}.
$$
\end{theorem}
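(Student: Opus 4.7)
The plan is to exploit the factorization $I = C_{\varphi}\circ H_n \circ M_g$, where $\varphi(x)=(\psi_1(x_1),\psi_2(x_1,x_2),\dots,\psi_n(x))$, $H_n$ is the $n$-dimensional Hardy operator studied in the previous section, and $M_g$ is multiplication by $g$. Once this identification is made, all three factors have norms known from results already proven in the paper, namely Theorem \ref{theorem:boundedLpq}, the Hardy inequality $\|H_n f\|_{L_P}\le \prod \frac{p_i}{p_i-1}\|f\|_{L_P}$, and Theorem \ref{theorem:Multiplication}.

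For sufficiency, the straightforward chain $\|I\|\le \|C_{\varphi}\|\cdot\|H_n\|\cdot\|M_g\|$ only produces a product of three essential suprema, which is generally weaker than the single essential supremum in the stated norm. To obtain the sharper bound, I would mimic the $H_2$ computation already performed in the section: rescale each inner integral by $y_i = t_i\psi_i(x_1,\dots,x_i)$, use Minkowski's integral inequality \eqref{eq:minkowski} iteratively to move the $[0,1]^n$-integrals outside the $L_P$-norm, and then perform the change of variables associated with $\varphi$ layer by layer (as in the sufficiency portion of Theorem \ref{theorem:boundedLpq}). The Jacobians $J(\psi_i^{-1}(y_1,\dots,y_{i-1},\cdot);y_i)$ together with the factor $g(y)$ end up multiplying the same integrand on $\Omega'$, so they may be pulled out under one common essential supremum. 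The remaining factor $\int_0^1 t_i^{-1/p_i}\,dt_i = \frac{p_i}{p_i-1}$ supplies the Hardy constants.

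For necessity, I would copy the inductive scheme of Theorem \ref{theorem:boundedLpq}. The base case $n=1$ is essentially a one-variable Hardy-Steklov boundedness fact. For the inductive step, I would test $I$ on functions of the form $f(y_1,\tilde y)=\chi_B(y_1) g_{y_1}(\tilde y)$ with $\|g_{y_1}\|_{L_{\widetilde P}}=1$, ranging over a shrinking ball $B\subset Y_1$. The resulting operator on slices is an $I$-type operator in dimension $n-1$ with $\psi_1(x_1)$ frozen; by the inductive hypothesis its norm equals the essential supremum of the corresponding $(n-1)$-product. Applying the change of variables formula \eqref{eq:chf1} and the Lebesgue differentiation theorem as $B$ shrinks to a point converts this into the claimed essential supremum over $\Omega'$.

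The main obstacle is not conceptual but bookkeeping: the Minkowski-plus-change-of-variables step must be carried out carefully enough to place $g(y)$ and all the $J(\psi_i^{-1};y_i)^{1/p_i}$ under a single essential supremum over $\Omega'$ rather than as a product of three independent suprema, and the induction for necessity must propagate the $g$-factor cleanly through the slicing procedure so that it appears inside the joint essential supremum on the nose.
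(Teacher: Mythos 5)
Your plan coincides with the paper's approach: the paper gives no written proof of this theorem at all, stating only that it follows ``by using the approach from the proof of Theorem \ref{theorem:boundedLpq}'' and recording the factorization $I = C_{\varphi}H_nM_g$ that you also take as your starting point, with necessity handled by the same slicing-plus-Lebesgue-differentiation induction. If anything you are more careful than the paper: you correctly flag that composing the three operator norms only yields $\big(\esssup g\big)\cdot\prod_{i}\frac{p_i}{p_i-1}\cdot\esssup\prod_i J(\psi_i^{-1};y_i)^{1/p_i}$, a product of separate suprema rather than the single joint essential supremum claimed for $\|I\|$, and that closing this gap requires running the rescaling, Minkowski, and change-of-variables steps jointly --- a genuine subtlety the paper leaves entirely unaddressed.
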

Operator $I$ has a representation as a product $C_{\varphi}H_nM_g$, assuming all involved operators are bounded. 

Finally, we write down sufficient conditions under which the two-dimensional Hardy-Steklov type operator
$$
\mathcal K f(x_1,x_2) =  \int\limits_{a_1(x_1)}^{b_1(x_2)}\int\limits_{a_2(x_1,x_2)}^{b_2(x_1,x_2)} f(y_1,y_2)k_1(x_1,y_1)k_2(x_1,x_2,y_2)\, dy_2 \, dy_1
$$
is bounded.
Derive from theorem \ref{theorem:product} that $\mathcal K : L_{q_1,q_2}(\mathbf R^2, \nu_1, \nu_2) \to L_{p_1,p_2}(\mathbf R^2, \mu_1, \mu_2)$  is bounded if the following two one-dimensional Hardy-Steklov type operators
$$
\mathcal K_1 g(x_1) =  \int\limits_{a_1(x_1)}^{b_1(x_1)}g(y_1)k_1(x_1,y_1) \, dy_1 \quad\text{ and }\quad 
\mathcal K_2 g(x_2) =  \int\limits_{a_2(x_1,x_2)}^{b_2(x_1,x_2)} g(y_2)k_2(x_1,x_2,y_2)\, dy_2
$$
are bounded in corresponding Lebesgue spaces. 
Conditions for the boundedness of one-dimensional Hardy-Steklov type operator can be found in \cite{SU, BJT}.

%

\providecommand{\WileyBibTextsc}{}
\let\textsc\WileyBibTextsc
\providecommand{\othercit}{}
\providecommand{\jr}[1]{#1}
\providecommand{\etal}{~et~al.}


\begin{thebibliography}{[10]}

\othercit
\bibitem{Inv}
 \textsc{S.~{Berhanu}},  \textsc{P.\,D. {Cordaro}},  and  \textsc{J.~{Hounie}},
{An introduction to involutive structures.} (Cambridge: Cambridge University
  Press, 2008).


\bibitem{St}
 \textsc{S.~{Stevi\'c}},
{Products of integral-type operators and composition operators from the mixed
  norm space to Bloch-type spaces.},
 \jr{{Sib. Mat. Zh.}} \textbf{50}(4), 915--927 (2009).


\bibitem{JJG}
 \textsc{P.\,K. {Jain}},  \textsc{P.~{Jain}},  and  \textsc{B.~{Gupta}},
{On certain weighted integral inequalities with mixed norm.},
 \jr{{Ital. J. Pure Appl. Math.}} \textbf{18}, 23--32 (2005).


\bibitem{FGJ}
 \textsc{A.~{Fiorenza}},  \textsc{B.~{Gupta}},  and  \textsc{P.~{Jain}},
{Compactness of integral operators in Lebesgue spaces with mixed norm.},
 \jr{{Math. Inequal. Appl.}} \textbf{11}(2), 335--348 (2008).


\bibitem{AK}
 \textsc{J.~{Appell}} and  \textsc{A.~{Kufner}},
{On the two-dimensional Hardy operator in Lebesgue spaces with mixed norms.},
 \jr{{Analysis}} \textbf{15}(1), 91--98 (1995).


\bibitem{GS}
 \textsc{W.~{Grey}} and  \textsc{G.~{Sinnamon}},
{Product operators on mixed norm spaces},
 \jr{Linear and Nonlinear Analysis} \textbf{2}(2), 189--197 (2016).


\bibitem{Be}
 \textsc{O.~{Besov}},
{Estimates of derivatives in a mixed $L\sp p$-norm on a domain and an extension
  of functions.},
 \jr{{Math. Notes}} \textbf{7}, 89--94 (1970).

\bibitem{Bl}
 \textsc{A.~{Blozinski}},
{Multivariate rearrangements and Banach function spaces with mixed norms.},
 \jr{{Trans. Am. Math. Soc.}} \textbf{263}, 149--167 (1981).


\bibitem{VU}
 \textsc{S.~{Vodop'yanov}} and  \textsc{A.~{Ukhlov}},
{Set functions and their applications in the theory of Lebesgue and Sobolev
  spaces. I.},
 \jr{{Sib. Adv. Math.}} \textbf{14}(4), 78--125 (2004).


\othercit
\bibitem{SM}
 \textsc{R.~{Singh}} and  \textsc{J.~{Manhas}},
{Composition operators on function spaces.} (Amsterdam: North-Holland, 1993).


\othercit
\bibitem{HL}
 \textsc{G.~{Hardy}},  \textsc{J.~{Littlewood}},  and  \textsc{G.~{P\'olya}},
{Inequalities. 2nd ed.},
{Cambridge, Engl.: At the University Press. XII, 324 p. (1952).}, 1952.


\bibitem{SU}
 \textsc{V.~{Stepanov}} and  \textsc{E.~{Ushakova}},
{On boundedness of a certain class of Hardy-Steklov type operators in Lebesgue
  spaces.},
 \jr{{Banach J. Math. Anal.}} \textbf{4}(1), 28--52 (2010).

\bibitem{BJT}
 \textsc{V.~{Burenkov}},  \textsc{P.~{Jain}},  and  \textsc{T.~{Tararykova}},
{On Hardy-Steklov and geometric Steklov operators.},
 \jr{{Math. Nachr.}} \textbf{280}(11), 1244--1256 (2007).


\end{thebibliography}
\end{document}